\theoremstyle{plain}
\newtheorem{theorem}{Theorem}[section]
\newtheorem{prop}[theorem]{Proposition}
\newtheorem{lemma}[theorem]{Lemma}
\newtheorem{cor}[theorem]{Corollary}
\newtheorem{conj}[theorem]{Conjecture}
\newtheorem{theoremintro}{Theorem}
\theoremstyle{definition}
\theoremstyle{remark}
\newtheorem{remark}[theorem]{Remark}
\newcommand{\isom}{\cong}
\newcommand{\Z}{\mathbb Z}
\newcommand{\R}{\mathbb R}
\newcommand{\C}{\mathbb C}
\newcommand{\F}{\mathbb F}
\renewcommand{\P}{\mathbb P}
\newcommand{\Q}{\mathbb Q}
\DeclareMathOperator{\Aut}{\mathrm{Aut}}
\DeclareMathOperator{\Br}{\mathrm{Br}}
\DeclareMathOperator{\Pic}{\mathrm{Pic}}
\newcommand{\mult}{^{\times}}
\newcommand{\multwo}{^{\times 2}}
\newcommand{\tensor}{\otimes}
\newcommand{\bslash}{\smallsetminus}
\newcommand{\mapto}[1]{\xrightarrow{#1}}
\newcommand{\mm}{\mathfrak{m}}
\newcommand{\pp}{\mathfrak{p}}
\newcommand{\linedef}[1]{\textsl{#1}}
\newcommand{\et}{\mathrm{\acute{e}t}}
\newcommand{\Het}{H_{\et}}
\newcommand{\ur}{\mathrm{ur}}
\newcommand{\Hur}{H_{\ur}}
\newcommand{\Pfister}[1]{\ll\!{#1}\gg}
\newcommand{\quadform}[1]{\;<\! #1 \!>}
\begin{document}

\title[Failure of the local-global principle for isotropy]{Failure of the local-global principle for isotropy of quadratic forms over function fields}

\author[Auel and Suresh]{Asher Auel and V.~Suresh}

\address{Asher Auel, Department of Mathematics, Dartmouth College, Kemeny Hall, Hanover, New Hampshire \hfill \texttt{\it E-mail address: \tt asher.auel@dartmouth.edu}\hspace*{9pt}}

\address{V.~Suresh, Department of Mathematics, Emory University,
  Mathematics \& Science Center, Atlanta, Georgia \hfill \texttt{\it E-mail address: \tt suresh.venapally@emory.edu}}




\begin{abstract}
We prove the failure of the local-global principle, with respect to
discrete valuations, for isotropy of quadratic forms in $2^n$
variables over function fields of transcendence degree $n \geq 2$ over
an algebraically closed field of characteristic $\neq 2$.  Our
construction involves the generalized Kummer varieties considered by
Borcea and by Cynk and Hulek as well as new results on the
nontriviality of unramified cohomology of products of elliptic curves
over discretely valued fields.
\end{abstract}

\maketitle

\section*{Introduction}

The Hasse--Minkowski theorem states that if a quadratic form $q$ over
a number field is isotropic over every completion, then $q$ is
isotropic.  This is the first, and most famous, instance of the
\linedef{local-global principle} for isotropy of quadratic forms.
Already for a function field of transcendence degree one over a number
field, Witt~\cite{witt} found examples of the failure of the
local-global principle for isotropy of quadratic forms in 3 (and also 4) variables.  Lind~\cite{lind} and Reichardt~\cite{reichardt}, and
later Cassels~\cite{cassels}, found examples of the failure of the
local-global principle for isotropy of pairs of quadratic forms in 4
variables over $\Q$ (see \cite{aitken_lemmermeyer} for a detailed
account), giving examples of quadratic forms over the function field
$\Q(t)$ by an application of the Amer--Brumer theorem
\cite{amer-brumer}, \cite[Theorem~17.14]{elman_karpenko_merkurjev}.
Cassels, Ellison, and Pfister~\cite{CEP} found examples in 4 variables
over the function field $\R(x,y)$.

Here, we are interested in the failure of the local-global principle
for isotropy of quadratic forms over function fields of higher
transcendence degree over algebraically closed fields.  All our fields
will be assumed to be of characteristic $\neq 2$ and all our quadratic
forms nondegenerate.  A quadratic form is called isotropic if it
admits a nontrivial zero.  If $K$ is a field and $v$ is a discrete
valuation on $K$, we denote by $K_v$ the fraction field of the
completion (with respect to the $v$-adic topology) of the valuation
ring of $v$.  When we speak of the \linedef{local-global principle for isotropy
of quadratic forms}, sometimes referred to as the strong Hasse
principle, in a given dimension $d$ over a given field $K$, we mean
the following statement:
\begin{quote}
If $q$ is a quadratic form in $d$ variables over $K$ and $q$ is
isotropic over $K_v$ for every discrete valuation $v$ on $K$, then $q$
is isotropic over $K$.
\end{quote}
Our main result is the following.

\begin{theoremintro}
\label{thm:main}
The local-global principle for isotropy of quadratic forms fails to
hold in dimension $2^n$ over any function field $K$ of transcendence
degree~$n \geq 2$ over an algebraically closed field $k$ of
characteristic $\neq 2$ other than possibly the algebraic closure of a
finite field.
\end{theoremintro}

Previously, only the case of $n=2$ was known, with the first explicit
examples over $K=\C(x,y)$ appearing in \cite{kim_roush}, and later in
\cite{bevelacqua} and \cite{jaworski}.  For a construction, using
algebraic geometry, over any transcendence degree 2 function field
over an algebraically closed field of characteristic~$0$, see
\cite{auel:oberwolfach}, \cite[\S6]{APS}.  In a previous version of
this work, Theorem~\ref{thm:main} was proved in the case of complex
rational function fields, and left as a conjecture.  Though we no
longer need to make use of it, in \S\ref{sec:general}, we also prove a
``geometric presentation lemma'' of general interest about the
existence of double covers of varieties admitting nontrivial
unramified cohomology in maximal degree, which was conjectured in an
earlier version of this work and was shown to imply
Theorem~\ref{thm:main}.

We recall that by Tsen--Lang theory~\cite[Theorem~6]{lang}, such
function fields are $C_n$-fields, hence have $u$-invariant $2^n$, and
thus all quadratic forms of dimension $> 2^n$ are already isotropic,
thus we provide counterexamples to the local-global principle in the
maximal dimension in which they could occur.

We mention that in the case of transcendence degree $n=1$, where
$K=k(X)$ for a smooth projective curve $X$ over an algebraically
closed field $k$, the local-global principle for isotropy of binary
quadratic forms (the ``global square theorem'') holds when the genus
of $X$ is zero and fails when $X$ has positive genus, see
Remark~\ref{rem:curves}.

Finally, when $k$ is the algebraic closure of a finite field, our
methods no longer work.  Though one can use other techniques to handle
the case of transcendence degree $n=2$ (see
Remark~\ref{rem:surfaces_finite_fields}), proving the failure of the
local-global principle for quadratic forms over function fields $K$ of
transcendence degree $n \geq 3$ over $\overline{\F}_p$ remains an open
problem.  Our method relies on proving the nontriviality of certain
unramified cohomology classes in top degree, see \S\ref{sec:general}.
Already for $n=3$, the existence of threefolds over $\F_p$ or
$\overline{\F}_p$ admitting nontrivial unramified cohomology in degree
3 is an open problem related to the integral Tate conjecture, see
\cite[Question~5.4]{CT_kahn}.

Our result relies on two new ingredients and one very useful trick.
The trick, due to Bogomolov~\cite{bogomolov:trick} and outlined in
\S\ref{sec:trick}, is a kind of refinement of the existence of
transcendence bases, and allows us to reduce the construction of
counterexamples to the local-global principle over general function
fields to the case of rational function fields.  Next, our
construction over rational function fields makes use of so-called
generalized Kummer varieties, first considered by Borcea~\cite{borcea}
and developed by Cynk and Hulek~\cite{cynk_hulek}, which are
constructed as quotients of products of elliptic curves and are
birationally double covers of rational varieties.  Finally, we prove a
new result (Theorem~\ref{thm:nontrivial}) on the nontriviality of
unramified cohomology on products of elliptic curves, which provides
an arithmetic generalization of a result of
Gabber~\cite[Appendice]{colliot:exposant_indice}, see also
Colliot-Th\'el\`ene~\cite{colliot:produit}.

We would like to thank the organizers of the summer school
\textit{ALGAR: Quadratic forms and local-global principles}, at the
University of Antwerp, Belgium, in July 
2017, where the authors
obtained a preliminary version of the result.  We also acknowledge the
Shapiro Visitor Program in the Department of Mathematics at Dartmouth
College, which enabled a prolonged visit, where the authors obtained
the final version of the result.  We would also like to thank Fedor
Bogomolov, Jean-Louis Colliot-Th\'el\`ene, David Leep, and Parimala
for very helpful discussions.  The first author received partial
support from NSA Young Investigator grant H98230-16-1-0321, Simons
Foundation grant 712097, and National Science Foundation grant
DMS-2200845; the second author from National Science Foundation grant
DMS-1463882.

\section{Bogomolov's trick}
\label{sec:trick}

Let $K/k$ be a finitely generated field extension.  Recall that $K/k$
admits a finite \linedef{transcendence basis}, i.e., a set of elements
$x_1, \dotsc, x_n \in K$ that are algebraically independent over $k$
and such that $K/k(x_1,\dotsc,x_n)$ is a finite extension.  
The cardinality of any transcendence basis is equal to the
transcendence degree of $K/k$.

A \linedef{projective model} of $K/k$ is an integral projective
$k$-variety $X$ whose field of rational functions is $k$-isomorphic to
$K$.  By the classical Chow's lemma, every finitely generated field
extension admits a projective model, where the dimension of the model
coincides with the transcendence degree of the extension.

The following statement, a refinement of the existence of
transcendence bases, can be traced back to Bogomolov, in the course of
the proof of \cite[Theorem~1.1]{bogomolov:trick}, cf.\ \cite[Proposition~20]{bogomolov_tschinkel}.  

\begin{lemma}[Bogomolov's trick]
\label{lem:trick}
Let $K/k$ be a finitely generated extension of transcendence degree
$n$.  Assume that $k$ is infinite and that a projective model of $K/k$
admits a smooth $k$-point.  Then for any prime number $p$, there
exists a transcendence basis $x_1, \dotsc, x_n \in K$ such that
$K/k(x_1,\dotsc,x_n)$ is finite of degree prime to~$p$.
\end{lemma} 

We remark that by the Lang--Nishimura theorem, see
\cite{lang:uniformization}, \cite{nishimura} and also
\cite[Proposition~A.6]{RY}, the existence of a smooth $k$-point on a
projective model of $K/k$ implies that any other projective model
admits a $k$-point.  The condition that a projective model admits a
smooth $k$-point also implies that any model is
geometrically integral and generically smooth, see
\cite[\href{https://stacks.math.columbia.edu/tag/0CDW}{Lemma
0CDW}]{stacks-project} and
\cite[\href{https://stacks.math.columbia.edu/tag/0CDW}{Lemma
056V}]{stacks-project}.  In particular, if $k$ is algebraically
closed, then any projective model of $K/k$ admits a smooth $k$-point.

\begin{proof}
As above, since a projective model of $K/k$ is geometrically integral,
it is geometrically reduced, and hence $K/k$ is separably generated by
a result of MacLane, see \cite[Theorem~A1.3]{eisenbud}.  Hence, as in
\cite[Proposition~I.4.9]{hartshorne}, there exists a projective
hypersurface model $X \subset \P^{n+1}$ of $K/k$.  Let $d$ be the
degree of $X$.  If $d=1$, then $X=\P^n$ and there is nothing to prove,
so we can assume that $d>1$.  

Projection from a $k$-point in the complement of $X$ (using that $k$
is infinite) yields a dominant rational map $X \dashrightarrow \P^n$
of degree $d$.  Indeed, it is dominant since the fibers of the
projection are the intersections of $X$ with the lines through the
point, and such intersections are always nonempty, cf.\
\cite[Theorem~I.7.2]{hartshorne}.  Moreover, it is generically finite
of degree $d$ since any line through the point cannot be contained in
$X$, hence must intersect $X$ in a zero-dimensional scheme, which has
length $d$.  Similarly, projection from a smooth $k$-point $P$ of $X$
yields a dominant rational map $X \dashrightarrow \P^n$ of degree
$d-1$.  Indeed, since $P$ is a smooth point, the tangent space to $X$
at $P$ has codimension 1 in $\P^{n+1}$, hence (again using that $k$ is
infinite) the general line in $\P^{n+1}$ through $P$ meets $X$
transversally at $P$ and thus intersects $X$ in a nonempty
zero-dimensional scheme of degree $d$ containing $P$ as an irreducible
component.  Then the general fiber of this projection, which is the
complement of $P$ in the intersection of $X$ with a general line
through $P$, is nonempty (using $d>1$) and has length $d-1$, cf.\
\cite[Example~18.16]{harris}.  Since $d$ and $d-1$ are relatively
prime, no prime number $p$ can divide both, hence the associated
extension of function fields $K = k(X) / k(\P^n) = k(x_1,\dotsc,x_n)$
can be chosen of degree prime to~$p$.
\end{proof}

We remark that the hypothesis on a projective model admitting a
$k$-point is essential.  For example, if $K/k$ is the function field
of a smooth plane conic $X$ with no $k$-point, then there is no
presentation of $K$ as an odd degree extension of a rational function
field $k(x)$. Indeed, $X$ cannot acquire rational points over rational
function fields (see \cite[Lemma~7.15]{elman_karpenko_merkurjev}) or
extensions of odd degree (by Springer's theorem), but does acquire a
rational point over its own function field.

We have the following immediate corollary of Bogomolov's trick.

\begin{cor}
\label{cor:trick}
Let $K$ be a finitely generated field of transcendence degree $n$ over
an algebraically closed field $k$.  Then there exists a
transcendence basis $x_1, \dotsc, x_n \in K$ such that
$K/k(x_1,\dotsc,x_n)$ is of odd degree.
\end{cor}

With this in mind, we now explain how Springer's theorem allows us to
reduce the construction of counterexamples to the local-global
principle for isotropy of quadratic forms over general function fields
to the case of rational function fields.

\begin{prop}
\label{prop:trick}
Let $q$ be a nondegenerate quadratic form over a field $K'$ and let
$K/K'$ be a finite extension of odd degree.  
If $q$ is a counterexample to the local-global principle for isotropy over $K'$, then
$q_{K}$ is such a counterexample over $K$.
\end{prop}
\begin{proof}
By Springer's theorem, since $q$ is anisotropic over $K'$ and $K/K'$
has odd degree, then $q_K$ is anisotropic over $K$.  To show that
$q_{K}$ is locally isotropic over $K$, let $v$ be a discrete valuation
on $K$, which then lies over a discrete valuation $v'$ on~$K'$.  Since
the completion $K_{v}$ is a finite extension of the completion
$K'_{v'}$ and since $q$ is isotropic over $K'_{v'}$, we see that
$q_{K}$ is isotropic over $K_{v}$.
\end{proof}

\section{Unramified cohomology of function fields}
\label{sec:valuations}

We now recall the notion of unramified cohomology, introduced in
\cite{colliot-thelene_ojanguren}, restricting ourselves to mod 2
coefficients.  Readers should consult the excellent survey
\cite{colliot:santa_barbara} for further details.  Let $k$ be a field
of characteristic $\neq 2$ and $K/k$ be a finitely generated
extension.  By a \linedef{discrete valuation} $v$ on $K/k$ we mean a
rank~$1$ discrete valuation~$v$ on~$K$ that is trivial on~$k$.
\smallskip

 For each discrete valuation $v$ on $K/k$ with
residue field $\kappa(v)$, recall the residue map in Galois cohomology
$$
\partial_v : H^n(K,\mu_2^{\tensor n}) \to H^{n-1}(\kappa(v),\mu_2^{\tensor n-1})
$$
which arises from the Gysin sequence associated to the closed point in
the spectrum of the valuation ring $R_v$ of $v$, see
\cite[\S3.3]{colliot:santa_barbara}.  The residue map is uniquely
determined by the property that $\partial_v \bigl( (u_1)\dotsm
(u_{n-1}) \cdot (\pi_v) \bigr) = (\overline{u}_1)\dotsm
(\overline{u}_{n-1})$, where $\pi_v$ is a uniformizer and
$u_1,\dotsc,u_{n-1}$ are units of $R_v$, and $\overline{u}$ means the
image of a unit in $\kappa(v)$.  The degree $n$ unramified cohomology
of $K/k$ is defined by
$$
\Hur^n(K/k,\mu_2^{\tensor n}) = \bigcap_{v}
\ker\bigl(\partial_v : H^n(K,\mu_2^{\tensor n}) \to
H^{n-1}(\kappa(v),\mu_2^{\tensor n-1})\bigr)
$$ 
where the intersection ranges over all discrete valuations $v$ on
$K/k$.  We say that an element $\alpha \in H^n(K,\mu_2^{\tensor n})$
is \linedef{unramified} if it belongs to $\Hur^n(K/k,\mu_2^{\tensor n})$.   

We recall two results about discrete valuations on rational function
fields that will be useful later.

\newpage 
\begin{prop}
\label{prop:valuations}\;~\;
\begin{enumerate}
\item Let $k$ be a field and $K=k(x_1,\dotsc,x_n)$ a rational function
field over $k$ with $n \geq 1$.  For each $1 \leq m \leq n$, there
exists a discrete valuation $v$ on $K/k$ satisfying $v(x_i) = 1$ for
all $1 \leq i \leq m$ and $v(x_i)=0$ for all $m+1\leq i \leq n$.

\item Let $k_0$ be a field with a discrete valuation $v_0$ and residue
field $\kappa_0$.  Then there exists a discrete valuation $v$ on the
rational function field $K_0=k_0(x_1,\dotsc,x_n)$, extending $v_0$ on
$k_0$, and with residue field $\kappa_0(x_1,\dotsc,x_n)$.
\end{enumerate}
\end{prop}  
\begin{proof}
For (a), let $A$ be the localization of $k[x_1, \cdots , x_n]$ at the
prime ideal $(x_1, \cdots , x_m)$.  Then $R = A[y_1, \cdots ,
y_{m-1}]/(x_m - x_1y_1, \cdots , x_m - x_{m-1}y_{m-1})$ is an integral
domain with field of fractions isomorphic to $K$.  Furthermore, the
ideal $\frak{p}$ of $R$ generated by the images of $x_1, \dotsc, x_m$
is a prime ideal and $R_\frak{p}$ is a discrete valuation ring.  The
valuation on $K/k$ given by this discrete valuation ring has the
required properties.  Geometrically, this corresponds to blowing up
the model $\P^n$ of $K/k$ along the linear subspace defined by $x_1=\dotsm=x_m=0$.

For (b), letting $R_0 \subset k_0$ be the valuation ring of $v_0$ and
$\pi_0$ a uniformizer, we take the discrete valuation $v$ on $K_0$
associated to the prime ideal in $R_0[x_1, \dotsc, x_n]
\subset K_0$ generated by $\pi_0$.  By construction,
the residue field of $v$ on $K_0$ is
$\kappa_0(x_1,\dotsc,x_n)$.  Geometrically, this corresponds to
the special fiber of the model $\P^n_{\!R_0}$. 
\end{proof}

\section{Generalized Kummer varieties}
\label{sec:kummer}

In this section, we review a construction, considered in the context
of modular Calabi--Yau varieties \cite[\S2]{cynk_hulek} and
\cite{cynk_schutt}, of a generalized Kummer variety attached to a
product of elliptic curves.  This recovers, in dimension 2, the Kummer
K3 surface associated to a decomposable abelian surface, and in
dimension 3, a class of Calabi--Yau threefolds of CM type considered by
Borcea~\cite[\S3]{borcea}.  We also prove some results about the
unramified cohomology groups in top degree of products of elliptic
curves and their associated generalized Kummer varieties.

Let $E_1, \dotsc, E_n$ be elliptic curves over an algebraically closed
field $k$ of characteristic $\neq 2$ and let $Y=E_1 \times \dotsm
\times E_n$.  Let $\sigma_i$ denote the negation automorphism on $E_i$
and $E_i \to \P^1$ the associated quotient branched double cover.
We extend each $\sigma_i$ to an automorphism of $Y$ by acting
trivially on each $E_j$ for $j \neq i$; the subgroup $G
\subset \Aut(Y)$ they generate is an elementary abelian $2$-group.
Consider the exact sequence of abelian groups
$$
1 \to H \to G \mapto{\Pi} \Z/2 \to 0,
$$ 
where $\Pi$ is defined by sending each $\sigma_i$ to $1$.  Then the
product of the double covers $Y \to {\P^1 \times \dotsm \times \P^1}$
is the quotient by $G$ and we denote by $Y \to X$ the quotient by the
subgroup $H$.  The intermediate quotient $X \to \P^1 \times \dotsm
\times \P^1$ is a double cover, branched over a reducible divisor of
type $(4, \dotsc, 4)$. For $n=2$, this divisor is the union of $4$
vertical fibers and $4$ horizontal fibers of $\P^1 \times \P^1$
meeting in $16$ points.

We point out that $X$ is a singular degeneration of smooth Calabi--Yau
varieties that (geometrically) admits a smooth Calabi--Yau model, see
\cite[Corollary~2.3]{cynk_hulek} and \cite[Section~4]{cynk_schutt}.
For $n=2$, the minimal resolution of $X$ is indeed isomorphic to the
Kummer K3 surface $\text{Kum}(E_1 \times E_2)$.

Given nontrivial classes $\gamma_i \in \Het^1(E_i,\mu_2)$, we consider
the cup product 
\begin{equation}
\label{eq:gamma}
\gamma = \gamma_1 \dotsm \gamma_n \in \Het^n(Y,\mu_2^{\tensor n})
\end{equation}
and its image in $\Hur^n(k(Y)/k,\mu_2^{\tensor n})$ under restriction
to the generic point. These classes have been studied in
\cite{colliot:exposant_indice}.  We remark that $\gamma$ is in the
image of the restriction map $H^n(k(\P^1\times \dotsm \times
\P^1),\mu_2^{\tensor n}) \to H^n(k(Y),\mu_2^{\tensor n})$ in Galois
cohomology since each $\gamma_i$ is in the image of the restriction
map $H^1(k(\P^1),\mu_2) \to H^1(k(E_i),\mu_2)$.

We make this more explicit as follows.  Corresponding to each double cover
$E_i \to \P^1$,  choose a Weierstrass equation in
Legendre form
\begin{equation}
\label{eq:legendre}
y_i^2 = x_i (x_i - 1) (x_i - \lambda_i)
\end{equation}
where $x_i$ is a coordinate on $\P^1$ and
$\lambda_i \in k \bslash \{0,1\}$, see \cite[III.1.7]{silverman}.
Then the branched double cover $X \to \P^1 \times \dotsm \times \P^1$
is birationally defined by the equation
\begin{equation}
\label{eq:y}
y^2 = \prod_{i=1}^n x_i (x_i - 1) (x_i - \lambda_i) = f(x_1,\dotsc,x_n)
\end{equation}
where $y=y_1\dotsm y_n$ in $k(Y)$, see \cite[\S3]{cynk_schutt}.  Up to
an automorphism, we can, and henceforth will, choose the Legendre
forms so that the image of $\gamma_i$ under the map
$\Het^1(E_i,\mu_2) \to H^1(k(E_i),\mu_2)$ coincides with the square
class $(x_i) \in k(E_i)/k(E_i)^{\times 2} = H^1(k(E_i),\mu_2)$ of the
rational function $x_i$, which is then visibly in the image of the
restriction map $H^1(k(\P^1),\mu_2) \to H^1(k(E_i),\mu_2)$.  Hence we
see that the (ramified) cup product class
$\xi = (x_1) \dotsm (x_n) \in H^n(k(x_1,\dotsm,x_n),\mu_2^{\tensor
  n})$, restricts to the unramified class
$\gamma \in \Hur^n(k(Y)/k,\mu_2^{\tensor n})$.

The first main result of this section is that the class $\xi$ already
restricts to an unramified class over the quadratic extension $k(X)$.
We prove a more general result that can be viewed as a higher
dimensional generalization of \cite[\S1]{colliot:santa_barbara}.

\begin{prop}
Let $k$ be an algebraically closed field of characteristic $\neq
2$ and $K=k(x_1,\dotsc,x_n)$ a rational function field over $k$.  For
$1 \leq i \leq n$, let $f_i(x_i) \in k[x_i]$ be polynomials of even
degree satisfying $f_i(0)\neq 0$, and let $f = \prod_{i=1}^n
x_i f_i(x_i)$.  Then the restriction of the class $\xi = (x_1) \dotsm
(x_n) \in H^n(K,\mu_2^{\tensor n})$ to $H^n(K(\sqrt{f}),\mu_2^{\tensor
n})$ is unramified with respect to all discrete valuations.
\end{prop}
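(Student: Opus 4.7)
The plan is to first rewrite $\xi_L$ over $L = K(\sqrt{f})$ in a form whose residues are more tractable, then analyze residues via the ramification of $L/K$.

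\textbf{Step 1: Rewriting $\xi_L$.} Since $f = \prod_i x_i f_i(x_i)$ is a square in $L$, we get the $H^1$-relation $\sum_{i=1}^n (x_i) + \sum_{i=1}^n (f_i(x_i)) = 0$ in $H^1(L, \mu_2)$. Combining this with (i) $(a)(a) = (a)(-1) = 0$ for $a \in L\mult$ (as $-1 \in k\multwo$) and (ii) $(x_i)(f_i(x_i)) = 0$ since this symbol lies in $H^2(k(x_i), \mu_2^{\tensor 2}) = 0$ (as $k(x_i)$ has cohomological dimension $\leq 1$ over the algebraically closed $k$), one can iteratively replace each $(x_k)$ in $\xi_L = (x_1) \cdots (x_n)$ using the $H^1$-relation; all cross terms containing a repeated $(x_j)$ or a pair $(x_j)(f_j(x_j))$ vanish, leaving
\[
\xi_L = (f_1(x_1))(f_2(x_2)) \cdots (f_n(x_n))_L =: \xi'_L \quad \text{in } H^n(L, \mu_2^{\tensor n}).
\]

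\textbf{Step 2: Residue reduction.} For any discrete valuation $w$ on $L$, let $v = w|_K$ (necessarily nontrivial, as $L/K$ is algebraic). The standard formula $\residue_w(\xi_L) = e(w/v) \cdot \res_{\kappa(w)/\kappa(v)} \residue_v(\xi')$ applies. When $v(f)$ is odd, $L/K$ ramifies at $v$ with $e(w/v) = 2$, so the residue vanishes modulo $2$. When $v(f)$ is even, $e(w/v) = 1$ and $\kappa(w) = \kappa(v)(\sqrt{\bar f'})$, where $\bar f'$ is the reduction of $f/\pi^{v(f)}$ (possibly already a square in $\kappa(v)$, in which case $\kappa(w) = \kappa(v)$). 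It then suffices to show
\[
\residue_v(\xi') = (\bar f') \cdot \eta \quad \text{for some } \eta \in H^{n-2}(\kappa(v), \mu_2^{\tensor (n-2)}),
\]
since $(\bar f')$ vanishes in $H^1(\kappa(w), \mu_2)$ and hence so does the residue.

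\textbf{Step 3: Verifying the divisibility (main obstacle).} The mod-$2$ tame symbol formula (higher-order terms vanishing because $-1 \in k\multwo$) gives
\[
\residue_v(\xi') = \sum_i v(f_i(x_i)) \prod_{j \neq i} (\bar u_j) \pmod 2,
\]
where $\bar u_j$ is the reduction of $f_j(x_j)/\pi^{v(f_j(x_j))}$. Since $f_i(0) \neq 0$, $v(x_i) > 0$ forces $v(f_i(x_i)) = 0$; since $\deg f_i$ is even, $v(x_i) < 0$ forces $v(f_i(x_i))$ to be even. Thus only indices $i$ with $v(x_i) = 0$ and $\bar x_i \in \kappa(v)$ a root of $f_i$ can contribute, and such $\bar x_i$ necessarily lie in $k\mult$ (hence are squares in $\kappa(v)$), since $k$ is algebraically closed. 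A case analysis on the set of contributing $i$, exploiting the explicit form $\bar f' = \prod_i \overline{x_i/\pi^{v(x_i)}} \cdot \prod_i \bar u_i$ and the fact that elements of $k\mult$ are squares in $\kappa(v)$, exhibits $\residue_v(\xi')$ as $(\bar f') \cdot \eta$ for an explicit $\eta$. The main obstacle is organizing this bookkeeping cleanly, particularly when several indices simultaneously have $v(f_i(x_i))$ odd; here both the algebraic closedness of $k$ and the evenness of each $\deg f_i$ are essential.
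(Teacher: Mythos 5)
Your route is genuinely different from the paper's, and Steps 1 and 2 are correct: the rewriting $\xi_L=(f_1(x_1))\dotsm(f_n(x_n))$ is valid (the cross terms die exactly as you say, using $(-1)\in L\multwo$ and Tsen's theorem for $H^2(k(x_i),\mu_2^{\tensor 2})=0$), and the compatibility $\residue_w\circ\res_{L/K}=e(w/v)\cdot\res_{\kappa(w)/\kappa(v)}\circ\residue_v$ together with the $e=2$ observation correctly disposes of the valuations with $v(f)$ odd. The paper avoids any tame-symbol computation: it first arranges $v(x_i)\geq 0$ for all $i$ by passing to reciprocal polynomials (this is where evenness of $\deg f_i$ enters there), centers $v$ at the prime $\pp=k[x_1,\dotsc,x_n]\cap\mm_v$, rewrites the symbol over $L$ so that all but at most one slot is a unit whose reduction lies in $\kappa(\pp)$, and then kills the residue because $\kappa(\pp)$ has transcendence degree at most $n-m$ over $k$, so the relevant cohomology group vanishes for dimension reasons. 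Your approach buys an explicit formula for the residue at the cost of having to actually evaluate it.

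That evaluation --- your Step 3 --- is not yet a proof as written, but the divisibility claim is true; here is what closes it. If some $v(x_j)\neq 0$ then $(\bar u_j)=0$ (for $v(x_j)>0$ because $\bar u_j=f_j(0)\in k\mult\subset\kappa(v)\multwo$; for $v(x_j)<0$ because $\bar u_j$ is the leading coefficient times an even power of a unit) and $j$ is not a contributing index, so every term of your residue sum contains $(\bar u_j)$ and the residue vanishes outright. So assume $v(x_j)=0$ for all $j$, let $S$ be the set of contributing indices, and note $|S|$ is even since $v(f)$ is even. Writing $A_j=(\bar u_j)$ for $j\in S$, your sum is the elementary symmetric class $e_{|S|-1}(\{A_j\})\cdot\prod_{j\notin S}(\bar u_j)$, and since $A_j^2=0$ one has $e_{|S|-1}=e_1\cdot e_{|S|-2}$ modulo $2$; here $e_1=\bigl(\prod_{j\in S}\bar u_j\bigr)$. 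Meanwhile $(\bar f')=e_1+\sum_{j\notin S}\bigl((\bar x_j)+(\bar u_j)\bigr)$, because $(\bar x_j)=0$ for $j\in S$ (then $\bar x_j$ is a root of $f_j$, hence in $k\mult$). Finally $\bigl((\bar x_j)+(\bar u_j)\bigr)\cdot\prod_{l\notin S}(\bar u_l)=0$ for $j\notin S$: the $(\bar u_j)$ part repeats a factor, and $(\bar x_j)(\bar u_j)=(\bar x_j)(f_j(\bar x_j))$ dies in $H^2(\kappa(v),\mu_2^{\tensor 2})$ by the same Tsen argument you used in Step 1 --- an ingredient your sketch does not flag but genuinely needs, since $\bar x_j$ for $j\notin S$ may be transcendental over $k$ and $(\bar x_j)$ need not vanish. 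Putting these together, $\residue_v(\xi')=(\bar f')\cdot e_{|S|-2}\cdot\prod_{j\notin S}(\bar u_j)$, which restricts to zero over $\kappa(w)=\kappa(v)(\sqrt{\bar f'})$, as required.
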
 
\begin{proof}
Let $L = K(\sqrt{f})$ and $v$ a discrete valuation on $L$ with
valuation ring $R$, maximal ideal $\mm$, and residue field
$\kappa$.  Write $\xi_L$ for the restriction of $\xi$ to $H^n(L,\mu_2^{\tensor n})$.

Suppose $v(x_i) < 0$ for some $i$.  Let $d_i$ be the degree of $f_i$
and consider the reciprocal polynomial $f_i^*(x_i) = x_i^{d_i}
f_i(\frac{1}{x_i})$, so that $x_if_i(x_i) = x_i^{d_i+1}\cdot
\frac{1}{x_i}f_i^*(\frac{1}{x_i})$.  Since $d_i$ is even, we have that
the polynomials $x_if_i(x_i)$ and $\frac{1}{x_i}f_i^*(\frac{1}{x_i})$
have the same class in $K\mult/K\multwo$.  

Thus, up to replacing, for all $i$ with $v(x_i) < 0$, the polynomial
$f_i$ by $f_i^*$ in the definition of $f$ and replacing $x_i$ by
$\frac{1}{x_i}$, we can assume that $v(x_i) \geq 0$ for all $i$
without changing the extension $L/K$.  Hence $k[x_1, \dotsc, x_n]
\subset R_v$.  

Consider $\pp = k[x_1, \dotsc, x_n] \cap \mm$. Then $\pp$ is a prime
ideal of $k[x_1, \dotsc, x_n]$ whose residue field $\kappa(\pp)$ is a
subfield of $\kappa$.  Let $K_\pp$ be the completion of $K$ at $\pp$
and $L_v$ the completion of $L$ at $v$. Then $K_\pp$ is a subfield of
$L_v$.
 
If $v(x_i) = 0$ for all $i$, then $\xi_L$ is unramified at $v$.  So
suppose that $v(x_i) \neq 0$ for some $i$.  By reindexing $x_1,
\dotsc, x_n$, we assume that there exists $m \geq 1$ such that $v(x_i)
> 0$ for $1 \leq i \leq m$ and $v(x_i) = 0$ for $m+1 \leq i \leq n$,
i.e., we have $x_1, \dotsc, x_m \in \pp$ and $x_{m+1}, \dotsc, x_n \not\in
\pp$.  In particular, the transcendence degree of $\kappa(\pp)$ over
$k$ is~${\leq n-m}$.

First, suppose $f_i(x_i) \in \pp$ for some $m+1 \leq i \leq n$. Since
$f_i(x_i)$ is a product of linear factors in $k[x_i]$, we have that
$x_i - a_i \in \pp$ for some $a_i \in k$, with $a_i \neq 0$ since
$f_i(0) \neq 0$. Thus the image of $x_i$ in $\kappa(\pp)$ is equal to $a_i$
and hence is a square in $K_\pp$. In particular, $x_i$ is a square in
$L_v$, thus $\xi_L$ is trivial (hence unramified) at $v$, cf.\ \cite[Proposition~1.4]{colliot-thelene_ojanguren}.

Now, suppose that $f_i(x_i) \not\in \pp$ for all $m+1 \leq i \leq n$.
Then for each $1 \leq i \leq m$, we see that since $x_i \in \pp$ and
$f_i(0) \neq 0$, we have $f_i(x_i) \not\in \pp$.  Consequently, we can
assume that $f = x_1\dotsm x_m u$ for some $u \in k[x_1, \dotsc, x_n]
\bslash \pp$.  We remark that $f = x_1\dotsm x_m u$ is a square in
$L$, so that $(x_1\dotsm x_m) = (u)$ in $H^1(L,\mu_2)$.  

For $m=1$, we see that $\xi_L = (u) \cdot (x_2) \dotsm (x_n)$ is
unramified at $v$ since $u$ and $x_2, \dotsc, x_n$ are units at $v$.

For $m > 1$, a computation with symbols
$$
(x_1) \dotsm (x_m) = (x_1) \dotsm
(x_{m-1}) \cdot (x_1\dotsm x_m) = (x_1) \dotsm (x_{m-1}) \cdot (u) \in H^m(L, \mu_2^{\tensor m})
$$
shows that $\xi_L = (x_1) \dotsm (x_{m-1}) \cdot (u) \cdot (x_{m+1})
\dotsm (x_n)$.  Since $u$ and $x_{m+1},\dotsc,x_n$ are units at $v$,
computing with the Galois cohomology residue homomorphism $\partial_v :
H^n(L, \mu_2^{\tensor n}) \to H^{n-1}(\kappa(v), \mu_2^{\tensor
n-1})$ from \S\ref{sec:valuations} shows that
$$
\partial_v\bigl((x_1) \dotsm (x_{m-1}) \cdot (u) \cdot (x_{m+1}) \dotsm
(x_n) \bigr) = \alpha \cdot (\overline{u}) \cdot (\overline{x}_{m+1}) \dotsm
(\overline{x}_n)
$$ 
for some $\alpha \in H^{m-2}(\kappa(v), \mu_2^{\tensor m-2})$, where
for any $h \in k[x_1, \dotsc, x_n]$, we write $\overline{h}$ for the
image of $h$ in $\kappa(\pp) \subset \kappa$.  Since the transcendence
degree of $\kappa(\pp)$ over $k$ is $\leq n-m$ and $k$ is
algebraically closed, we have that $\kappa(\pp)$ has $2$-cohomological
dimension $\leq n-m$ by
\cite[II.4.2~Proposition~11]{serre:galois_cohomology}, so that
$H^{n-m+1}(\kappa(\pp), \mu_2^{\tensor n-m+1})= 0$. Since
$\overline{u}, \overline{x}_i \in \kappa(\pp)$, we then have that
$(\overline{u}) \cdot (\overline{x}_{m+1}) \dotsm (\overline{x}_n)$ is
trivial.  In particular, $\partial_v(\xi_L)$ is trivial, and hence
$\xi_L$ is unramified at $v$.  Finally, we have shown that the
restriction $\xi_L$ is unramified at all discrete valuations on~$L$.
\end{proof}

As an immediate consequence, we deduce the fact that the class $\xi$
 restricts to an unramified class over $k(X) = k(x_1,\dotsc,
 x_n)(\sqrt{f})$, where $f$ is as in \eqref{eq:y}.

\begin{prop}
\label{prop:main_unram}
Let $E_1, \dotsc, E_n$ be elliptic curves over an algebraically closed
field~$k$ of characteristic $\neq 2$, given in the Legendre form
\eqref{eq:legendre}, with $K = k(x_1,\dotsc,x_n)$. Then the
restriction of the class $\xi = (x_1) \dotsm (x_n)$ in
$H^n(K,\mu_2^{\tensor n})$ to $H^n(k(X),\mu_2^{\tensor n})$ is
unramified at all discrete valuations.
\end{prop}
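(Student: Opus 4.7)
The plan is to deduce this from the previous proposition by choosing the $f_i$ appropriately. Specifically, for each $i$, set $f_i(x_i) = (x_i - 1)(x_i - \lambda_i) \in k[x_i]$. Then $f_i$ has degree $2$, which is even, and $f_i(0) = (-1)(-\lambda_i) = \lambda_i \neq 0$ since $\lambda_i \in k \bslash \{0,1\}$ by hypothesis. So the hypotheses of the previous proposition are satisfied.

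With these choices, $f = \prod_{i=1}^n x_i f_i(x_i) = \prod_{i=1}^n x_i(x_i-1)(x_i-\lambda_i)$, which coincides exactly with the polynomial defining the birational model \eqref{eq:y} of $X$ over $\P^1 \times \dotsm \times \P^1$. Consequently, $k(X) = K(\sqrt{f})$ is precisely the extension $L$ appearing in the previous proposition, and the class $\gamma = (x_1) \dotsm (x_n) \in H^n(K,\mu_2^{\tensor n})$ is literally the class $\xi$ considered there. The previous proposition then asserts exactly that $\gamma_{k(X)}$ is unramified with respect to every discrete valuation on $k(X)$, which is the conclusion we want.

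Since every hypothesis is immediate, there is essentially no obstacle; the content of the statement lies entirely in the previous proposition. The only thing worth checking carefully is that the birational identification of $k(X)$ with $K(\sqrt{f})$ is the correct one and that the class $\gamma$ defined cohomologically via the $\gamma_i \in \Het^1(E_i,\mu_2)$ does restrict, at the generic point of $Y$, to the symbol $(x_1)\dotsm(x_n)$ in $H^n(K,\mu_2^{\tensor n})$ pulled back along $k(X) \hookrightarrow k(Y)$; this was already arranged in the discussion preceding the Legendre form equation \eqref{eq:legendre}, where the Legendre coordinates were chosen so that the image of $\gamma_i$ in $H^1(k(E_i),\mu_2)$ is the square class $(x_i)$.
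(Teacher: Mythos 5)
Your proposal is correct and is exactly what the paper intends: it states Proposition~\ref{prop:main_unram} as an ``immediate consequence'' of the preceding proposition, obtained precisely by taking $f_i(x_i) = (x_i-1)(x_i-\lambda_i)$, so that $f = \prod_i x_i(x_i-1)(x_i-\lambda_i)$ and $k(X) = K(\sqrt{f})$. Your verification of the hypotheses (even degree, $f_i(0)=\lambda_i\neq 0$) and of the identification of $\gamma$ with the symbol $(x_1)\dotsm(x_n)$ matches the paper's setup.
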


This unramified class on $k(X)$ restricts to the class $\gamma$ on
$k(Y)$ in \eqref{eq:gamma}, so without loss of generality, we will
also call it $\gamma$.  Finally, we will need conditions ensuring that
our class $\gamma$ is nontrivial over $k(X)$.  For this, we must
choose the elliptic curves $E_1, \dotsc, E_n$ more carefully, and we
will then show that $\gamma$ is nontrivial over $k(Y)$, hence is
nontrivial over $k(X)$.  We proceed as follows.  

First, we choose a subfield $k_0 \subset k$ admitting a discrete
valuation $v_0$.  This is possible unless $k$ is the algebraic closure
of a finite field; this is why we must henceforth assume that $k$ is
not the algebraic closure of a finite field.  Then we choose $E_i$
defined over $k_0$ with Weierstrass equation \eqref{eq:legendre}
satisfying $v_0(\lambda_i) > 0$.  Finally, we appeal to the following
arithmetic version, which was inspired by
Bogomolov~\cite[\S7]{bogomolov:stable}, of a result of
Gabber~\cite[Appendice]{colliot:exposant_indice}.

\begin{theorem}
\label{thm:nontrivial}
Let $k_0$ be a field with a discrete valuation $v_0$ whose residue
field has characteristic $\neq 2$. Let $E_1, \dotsc, E_n$ be elliptic
curves over $k_0$ given in the Legendre form \eqref{eq:legendre}, with
$v_0(\lambda_i) > 0$ for all $1 \leq i \leq n$.  Let
$Y = E_1 \times \dotsm \times E_n$ and $k/k_0$ be an algebraically
closed extension.  Then the class
$\gamma \in H^n(k(Y), \mu_2^{\tensor n})$ in \eqref{eq:gamma} is
nontrivial.
\end{theorem}

\begin{proof}
Let $K_0 = k_0(x_1, \dotsc, x_n)$ and let $\gamma_0$ be the
restriction of the class $\xi_0 = (x_1)\dotsm (x_n) \in H^n(K_0,
\mu_2^{\tensor n})$ to $H^n(k_0(Y), \mu_2^{\tensor n})$.  Letting
$\kappa_0$ be the residue field of $v_0$, by
Proposition~\ref{prop:valuations}\textit{b)} we can extend $v_0$ to a
discrete valuation on $K_0$ with residue field $\kappa_0(x_1,\dotsc,
x_n)$.  We remark that each $x_i \in K_0$ is a unit with respect to
this valuation.  Since $k_0(Y)/K_0$ is a finite separable extension,
we can further extend this valuation to a discrete valuation
$\tilde{v}$ on $k_0(Y)$. Writing
$$
k_0(Y) = k_0(x_1, \cdots,x_n)(\sqrt{x_1(x_1 - 1)(x_1 - \lambda_1)},
\cdots , \sqrt{x_n(x_n - 1)(x_n - \lambda_n)})
$$
then since $\tilde{v}(\lambda_i) > 0$ and $\tilde{v}(x_i) = 0$ for
all $i$, we have that the residue field of $\tilde{v}$ is
$$
\tilde{\kappa} = \kappa_0(x_1, \cdots,x_n)(\sqrt{x_1 - 1}, \cdots,
\sqrt{x_n - 1}).
$$
Since each $x_i$ is a unit at $\tilde{v}$, the
class $\gamma_0$ is unramified at $\tilde{v}$, and has
specialization $\tilde{\xi}_0 = (x_1) \cdots (x_n) \in
H^n(\tilde{\kappa}, \mu_2^{\tensor n})$.

We now argue that $\tilde{\xi}_0$ is nontrivial, hence that $\gamma_0$
is nontrivial.  To this end, by
Proposition~\ref{prop:valuations}\textit{a)} there is a valuation
$v_n$ on $\kappa_0(x_1, \cdots,x_{n})$ such that $v_n(x_i) = 0$ for $1
\leq i \leq n-1$ and $v_n(x_n)=1$, and we denote by $\tilde{v}_n$ an
extension to $\tilde{\kappa}$, which is separable over
$\kappa_0(x_1,\dotsc,x_n)$ and unramified at $\tilde{v}_n$.  Thus
$\tilde{v}_n$ is trivial on the subfield
$$
\tilde{\kappa}_n = \kappa_0(x_1, \cdots,x_{n-1})(\sqrt{x_1-1}, \cdots,
\sqrt{x_{n-1}-1})
$$
and satisfies $\tilde{v}_n(x_n)=1$.  Then the residue field of
$\tilde{v}_n$ is $\tilde{\kappa}_n(\sqrt{-1})$ and the residue of the
class $\tilde{\xi}_0$ at $\tilde{v}_n$ is simply $(x_1) \cdots
(x_{n-1})$.  Repeatedly taking residues using this process, we arrive
at the class $(x_1) \in H^1(\kappa_0(x_1)(\sqrt{-1},\sqrt{x_1-1}),\mu_2)$, which is nontrivial, hence $\tilde{\xi}_0$
is nontrivial.  Thus $\gamma_0 \in H^n(k_0(Y),\mu_2^{\tensor n})$ is
nontrivial.

Now let $k/k_0$ be any algebraically closed field extension and let
$\bar{k}_0$ be the algebraic closure of $k_0$ in $k$.  First, we show
that the restriction of $\gamma_0$ to $H^n(\bar{k}_0(Y),\mu_2^{\tensor
n})$ is nontrivial.  This is equivalent to the restriction of
$\gamma_0$ to $H^n(l_0(Y),\mu_2^{\tensor n})$ being nontrivial for
every finite algebraic extension $l_0/k_0$.  Letting $w_0$ be an
extension of $v_0$ to $l_0$, we still have that $w_0(\lambda_i) > 0$
for all $i$, so we can apply what we have already proved.  Second,
since $\gamma_0$ is unramified, its restriction to
$H^n(\bar{k}_0(Y),\mu_2^{\tensor n})$ and further to
$H^n(k(Y),\mu_2^{\tensor n})$, remains unramified and coincides with
the class $\gamma$.  Then we can appeal to the rigidity property for
unramified cohomology, which implies that the restriction map
$\Hur^n(\bar{k}_0(Y)/\bar{k},\mu_2^{\tensor n}) \to
\Hur^n(k(Y)/k,\mu_2^{\tensor n})$ is an isomorphism, see
\cite[\S4.4]{colliot:santa_barbara}, showing that $\gamma$ is
nontrivial.
\end{proof}

Additional aspects and applications of the argument in the proof of
Theorem~\ref{thm:nontrivial} will be the subject of forthcoming work
\cite{auel_suresh:elliptic_unramified}.  In particular,
$\mu_2^{\tensor n}$ coefficients can be replaced by $\mu_\ell^{\tensor
n}$ coefficients for any positive integer $\ell$ prime to the residue
characteristic of $k_0$.  We content ourselves with giving one
application here, which is a new proof of (a generalization of)
Gabber's result \cite[Appendice]{colliot:exposant_indice}.

\begin{cor}
Let $k$ be a field of characteristic $\neq 2$ and $K/k$ an
algebraically closed extension.  Let $E_1, \dotsc, E_n$ be elliptic
curves over $K$ whose $j$-invariants are algebraically independent
over $k$.  Let $Y = E_1 \times \dotsm \times E_n$.  Then the class
$\gamma \in H^n(K(Y), \mu_2^{\tensor n})$ in \eqref{eq:gamma} is
nontrivial.
\end{cor}

\begin{proof}
Since $K$ is algebraically closed, each elliptic curve $E_i$ can be
put into Legendre form \eqref{eq:legendre}. Hence $Y$ is defined over
the field $k_0 = k(\lambda_1, \dotsc, \lambda_n)$.  Since 
the $j$-invariant of $E_i$ is a rational function in $\lambda_i$, the
algebraic independence of $j(E_1), \dotsc, j(E_n)$ over~$k$ implies
the algebraic independence of $\lambda_1, \dotsc, \lambda_n$ over~$k$.
By Proposition~\ref{prop:valuations}\textit{a)}, there exists a
discrete valuation $v_0$ on $k_0$ such that $v_0(\lambda_i)>0$ for all
$i$, and then we can apply Theorem~\ref{thm:nontrivial}.
\end{proof}

\section{Hyperbolicity over a quadratic extension}
\label{sec:pfister}

Let $K$ be a field of characteristic $\neq 2$. We will need the
following result about isotropy of quadratic forms, generalizing a
well-known result in the dimension four case, see \cite[Ch.~2,~Lemma~14.2]{scharlau:book}.

\begin{prop}
\label{prop:overL}
Let $q$ be a quadratic form over $K$ of dimension divisible by $4$
and discriminant $d$, and let $L = K(\sqrt{d})$.  If $q$
is hyperbolic over $L$ then $q$ is isotropic over $K$.
\end{prop}
\begin{proof}
If $d \in K\multwo$, then $K = L$ and there is nothing to prove, so
suppose {$d \not\in K\multwo$}.  To get a contradiction, we will
assume $q$ is anisotropic.  Since $q_L$ is hyperbolic, we then have $q
\simeq \, \quadform{1, -d} \tensor\; q_1$ for some quadratic form
$q_1$ over $K$, see \cite[Ch.~2,~Theorem~5.2]{scharlau:book}.  Since
the dimension of $q$ is divisible by four, the dimension of $q_1$ is
divisible by two, and a computation of the discriminant shows that $d
\in K\multwo$, which is a contradiction.
\end{proof}

For $n \geq 1$ and $a_1, \dotsc, a_n \in K\mult$, recall the $n$-fold
Pfister form
$$
\Pfister{a_1, \dotsc, a_n} \;= \quadform{1, -a_1}\, \tensor \dotsm
\tensor \quadform{1, -a_n}
$$
and the associated symbol $(a_1) \dotsm (a_n)$ in the Galois
cohomology group $H^n(K,\mu_2^{\tensor n})$.  Then $\Pfister{a_1,
\dotsc, a_n}$ is hyperbolic if and only if $\Pfister{a_1, \dotsc,
a_n}$ is isotropic if and only if $(a_1) \dotsm (a_n)$ is trivial.
For the fact that isotropic Pfister forms are hyperbolic, see
\cite[Ch.~4,~Corollary~1.5]{scharlau:book}.  The fact that the
triviality of $(a_1) \dotsm (a_n)$ implies the hyperbolicity of
$\Pfister{a_1, \dotsc, a_n}$ is a consequence of the Milnor
conjectures for the Witt group, as proved by
Voevodsky~\cite{voevodsky:Milnor_conjecture} and Orlov, Vishik,
Voevodsky~\cite{orlov_vishik_voevodsky}.

For $d \in K\mult$ and $n \geq 2$, we will consider quadratic forms of
discriminant $d$ related to $n$-fold Pfister forms, as follows.  Write
$\Pfister{a_1, \dotsc, a_n}$ as $q_0 \perp \quadform{(-1)^n a_1 \dotsc
a_n}$, then define $\Pfister{a_1,\dotsc,a_n;d} \;=\; q_0 \,\perp
\quadform{(-1)^n a_1 \dotsc a_n d}$.  For example:
\begin{align*}
\Pfister{a;d} {} = & \quadform{1, -a d} \\
\Pfister{a,b;d} \;\;= & \quadform{1, -a, -b, abd} \\
\Pfister{a,b,c;d} \;\;= & \quadform{1, -a, -b, -c, ab, ac, bc, -abcd}
\end{align*}
for $n=1, 2, 3$, respectively.  We remark that every quadratic
form of dimension~$4$ is similar to one of this type.  We also remark
that $\Pfister{a_1,\dotsc,a_n;d}$ becomes isomorphic to
$\Pfister{a_1,\dotsc,a_n}$ over $K(\sqrt{d})$.  In general, these
quadratic forms are examples of \linedef{twisted Pfister forms} in the
sense of Hoffmann~\cite{hoffmann}.

\begin{prop}
\label{prop:twisted_split}
  Assume $n \geq 2$.  If $q = \Pfister{a_1,\dotsc,a_n;d}$ and $L = K(\sqrt{d})$ then $q$
  is isotropic if and only if $q_L$ is isotropic if and only if
  $(a_1)\dotsm (a_n) \in H^n(L,\mu_2^{\tensor n})$ is trivial.
\end{prop}
\begin{proof}
If $q$ is isotropic then $q_L$ is isotropic.  If $q_L$ is isotropic,
then as mentioned above, it is hyperbolic as it is a Pfister form,
hence by Proposition~\ref{prop:overL} (since $q$ has dimension $2^n$
and $n \geq 2$), $q$ is isotropic over $K$.  As previously mentioned above
(and consequence of the Milnor conjectures), $(a_1)\dotsm (a_n) \in
H^n(L,\mu_2^{\tensor n})$ is trivial if and only the Pfister form
$q_L$ is isotropic.
\end{proof}

This generalizes a well-known result about quadratic
forms of dimension 4, see \cite[Ch.~2,~Lemma~14.2]{scharlau:book}.

\section{Failure of the local global principle}
\label{sec:unram}

In this section, we prove our main Theorem~\ref{thm:main} by providing
a construction of quadratic forms over function fields that are
locally isotropic yet globally anisotropic.  First we prove a general
result about the generalized Pfister forms in
Section~\ref{sec:pfister}.

\begin{prop}
\label{prop:local_isotropic}
Let $k$ be an algebraically closed field of characteristic $\neq 2$
and $K/k$ a finitely generated extension of transcendence degree $n
\geq 2$.  Let $a_1, \dotsc, a_n, d \in K\mult$ be such that the symbol
$(a_1)\dotsm (a_n)$ in $H^n(K,\mu_2^{\tensor n})$ becomes unramified
over $L=K(\sqrt{d})$.  Then the quadratic form $q =
\Pfister{a_1,\dotsc,a_n;d}$ is locally isotropic over $K$.
\end{prop}
\begin{proof}
Let $v$ be a discrete valuation on $K$ and $w$ an extension to $L$,
with completions $K_v$ and $L_w$ and residue fields $\kappa(v)$ and
$\kappa(w)$, respectively.  By assumption, the restriction of the
symbol $(a_1) \dotsm (a_n)$ to $H^n(L,\mu_2^{\tensor n})$ is
unramified at $w$.  By cohomological purity for discrete valuation
rings (cf.\ \cite[\S3.3]{colliot:santa_barbara}) we have a surjective
map $\Het^n(R_w,\mu_2^{\tensor n}) \to \Hur^n(L_w/k,\mu_2^{\tensor
n})$ where $R_w \subset L_w$ is the valuation ring.  By proper base
change (cf.\ \cite[XII.5.5]{SGA4}, see also a general result of Gabber
\cite[\href{https://stacks.math.columbia.edu/tag/09ZI}{Tag
09ZI}]{stacks-project}), we have an isomorphism
$\Het^n(R_w,\mu_2^{\tensor n}) \isom H^n(\kappa(w),\mu_2^{\tensor
n})$.  Since $\kappa(w)/k$ has transcendence degree $<n$ by
Abhyankar's inequality~\cite[Corollary~1(1)]{abhyankar} and $k$ is
algebraically closed, we have that $\kappa(w)$ has $2$-cohomological
dimension $<n$ by
\cite[II.4.2~Proposition~11]{serre:galois_cohomology}.  From all this,
we deduce that $\Hur^n(L_w/k,\mu_2^{\tensor n})=0$.  In particular,
the symbol $(a_1) \dotsm (a_n)$ has trivial restriction to
$H^n(L_w,\mu_2^{\tensor n})$.  Thus by
Proposition~\ref{prop:twisted_split}, we have that $q_{K_v}$ is
isotropic.  Finally, as this holds for every discrete valuation~$v$
on~$K$, the quadratic form $q$ is locally isotropic over $K$.
\end{proof}

Now, we will utilize our constructions in
Section~\ref{sec:kummer}. Let $k$ be an algebraically closed field of
characteristic $\neq 2$ that is not the algebraic closure of a finite
field.  Let $k_0 \subset k$ be a subfield with a discrete valuation
$v_0$ whose residue field has characteristic~$\neq~2$. Let $E_1,
\dotsc, E_n$ be elliptic curves over $k_0$ given in the Legendre
form~\eqref{eq:legendre}, with $v_0(\lambda_i) > 0$ for all $1 \leq i
\leq n$.  Let $X \to \P^1 \times \dotsm \times \P^1$ be the double
cover defined by $y^2 = \prod_{i=1}^nx_i(x_i-1)(x_i-\lambda_i) =
f(x_1,\dotsc,x_n)$ in \eqref{eq:y}, and consider the quadratic form
\begin{equation}
\label{eq:counter_example}
q = \;\Pfister{x_1,\dotsc,x_n;f}
\end{equation}
over the rational function field $k(\P^1 \times \dotsm \times \P^1) =
k(x_1,\dotsc,x_n)$, as in Section~\ref{sec:pfister}.

Our main result is that for $n \geq 2$, the quadratic form $q$ shows
the failure of the local-global principle for isotropy, with respect
to all discrete valuations, for quadratic forms of dimension $2^n$
over $k(x_1,\dotsc,x_n)$.

\begin{theorem}
\label{thm:locally}
Let $k$ be an algebraically closed field of
characteristic $\neq 2$ that is not the algebraic closure of a finite
field and assume $n \geq 2$.  The quadratic form $q =
\;\Pfister{x_1,\dotsc,x_n;f}$ as in \eqref{eq:counter_example} is anisotropic over $k(x_1,\dotsc,x_n)$
yet is isotropic over the completion at every discrete valuation.
\end{theorem}
\begin{proof}
Write $K = k(x_1,\dotsc,x_n)$ and $L = K(\sqrt{f})$.  By
Proposition~\ref{prop:main_unram}, the restriction of the symbol
$(x_1) \dotsm (x_n) \in H^n(K,\mu_2^{\tensor n})$ to $L$ is
unramified.  Hence Proposition~\ref{prop:local_isotropic} implies that
$q$ is locally isotropic at every discrete valuation on $K$.

The restriction of the symbol $(x_1) \dotsm (x_n) \in
H^n(K,\mu_2^{\tensor n})$ to $L$ is nontrivial since its further
restriction to $k(E_1 \times \dotsm \times E_n)$ is nontrivial by
Theorem~\ref{thm:nontrivial}.  Hence
Proposition~\ref{prop:twisted_split} implies that $q$ is anisotropic
over $K$.
\end{proof}

\begin{proof}[Proof of Theorem~\ref{thm:main}]
By Bogomolov's trick (Corollary~\ref{cor:trick}), we find $x_1,
\dotsc, x_n \in K$ such that $K/k(x_1,\dotsc,x_n)$ has odd degree.
If $q$ is as in \eqref{eq:counter_example}, then by
Theorem~\ref{thm:locally}, $q$ is anisotropic yet locally isotropic
over $k(x_1,\dotsc,x_n)$.  Finally, by Proposition~\ref{prop:trick},
$q$ is a counterexample to the local-global principle for isotropy
over $K$.
\end{proof}

To give an explicit example, let $a,b,c \in \overline{\Q} \bslash
\{0,1\}$ be any algebraic integers all divisible by a common odd prime
ideal in a number field containing them. For example, take $a = b = c
= 3$.  Then over the function field $K = \C(x,y,z)$, the quadratic form
$$
q = \quadform{1,x,y,z,xy,xz,yz,(x-1)(y-1)(z-1)(x-a)(y-b)(z-c)}
$$
is isotropic over every completion $K_v$ associated to a 
discrete valuation $v$~on~$K$, and yet $q$ is anisotropic over $K$.  

\begin{remark}
\label{rem:curves}
Let $k$ be any algebraically closed field of characteristic $\neq 2$.
When $K/k$ is a finitely generated field of transcendence degree $1$,
then $K=k(X)$ for a smooth projective curve $X$ over $k$. Any binary
quadratic form $q$ over $K$ is similar to $\Pfister{a\!}\; =
\quadform{1,-a}$ for some $a \in K\mult$, and $q$ is isotropic if and
only if $a$ is a square.  For any discrete valuation $v$ on $K$, we
have that $q$ is isotropic over $K_v$ if and only if $a$ is a square
in $K_v$, equivalently (since $k$ is algebraically closed and
characteristic~$\neq 2$), $v(a)$ is even.  Thus if $q$ is locally
isotropic at all discrete valuations on $K$ then the divisor of the
rational function $a$ on $X$ can be written as~$2D$ for a divisor~$D$
on~$X$.  The divisor class of $D$ is 2-torsion in $\Pic(X)$ and it is
trivial if and only if $a$ is a square in $K$.  Conversely, if $X$
admits a nontrivial $2$-torsion element of $\Pic(X)$, then twice this
element is the divisor of a rational function $a \in K$ and the
local-global principle fails for $\Pfister{a}$. Thus the local-global
principle for isotropy fails for $K$ if and only if the Picard group
of $X$ admits a nontrivial $2$-torsion element, equivalently (again,
since $k$ is algebraically closed and characteristic $\neq 2$), the
genus of $X$ is positive.  Equivalently, the local-global principle
for isotropy holds for quadratic forms over $K$ if and only if $K/k$
is purely transcendental.

In fact, we see that Proposition~\ref{prop:twisted_split} (and hence Proposition~\ref{prop:local_isotropic}) is false
for $n=1$ by considering the trivial class in $H^1(K,\mu_2)$ and $d
\in K\mult$ any nonsquare.
\end{remark}

\begin{remark}
\label{rem:surfaces_finite_fields}
When $k$ is the algebraic closure of a finite field of characteristic
$\neq 2$, Theorem~\ref{thm:locally} still holds for $n=2$ assuming
that the elliptic curves $E_1$ and $E_2$ are not isogenous.  Indeed,
by Proposition~\ref{prop:main_unram}, the restriction of the symbol
$(x_1)\cdot (x_2) \in H^2(K,\mu_2^{\tensor 2})$ to $L$ is still
unramified, and the only thing left to verify is that it is
nontrivial.  We can check this by further restriction to $k(E_1 \times
E_2)$, where the symbol is the restriction to the generic point of a
class in $\Het^1(E_1,\mu_2) \otimes \Het^1(E_2,\mu_2)$
by~\S\ref{sec:kummer}.  However, standard computations of the Brauer
group of $E_1 \times E_2$, cf.\ \cite[\S3]{skoro_zarhin}, show that if
$E_1$ is not isogenous to $E_2$, then in fact $\Br(E_1 \times E_2)
\isom \Het^1(E_1,\mu_2)\otimes\Het^1(E_2,\mu_2)$, so that each such
cup product class is indeed nontrivial in the Brauer group.  Then, as
before, Proposition~\ref{prop:local_isotropic} implies that the
local-global principle for isotropy fails for $q$ as in
\eqref{eq:counter_example} over $K$, hence Theorem~\ref{thm:main} also
holds in this case.
\end{remark}

\section{A geometric presentation lemma}
\label{sec:general}

The method for producing locally isotropic but globally anisotropic
quadratic forms of dimension $2^n$ over function fields of
transcendence degree $n$ presented in this work is different from the
one employed in \cite[\S6]{APS} for $n=2$.  There, we first proved a
kind of geometric presentation lemma about the existence of nontrivial
unramified cohomology (in degree 2) over quadratic extensions.
Specifically, using Hodge theory, we proved
\cite[Proposition~6.4]{APS} that given any smooth projective surface
$S$ over an algebraically closed field of characteristic zero, there
exists a double cover $T \to S$ with $T$ smooth and $\Hur^2(k(T)/k,
\mu_2^{\tensor 2}) = \Br(T)[2] \neq 0$.  It has been an open question
ever since whether such a geometric presentation lemma holds for
unramified cohomology in higher degree.

\begin{conj}
\label{conj:double}
Let $K$ be a finitely generated field of transcendence degree $n$ over
an algebraically closed field $k$ of characteristic $\neq 2$.  Then
either $\Hur^n(K/k,\mu_2^{\tensor n}) \neq 0$ or there exists a
separable quadratic extension $L/K$ such that
$\Hur^n(L/k,\mu_2^{\tensor n}) \neq 0$.
\end{conj}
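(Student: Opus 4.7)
The plan is to prove Conjecture \ref{conj:double} by a geometric realization argument, generalizing \cite[Proposition~6.4]{APS} from the $n=2$ case to all $n \geq 2$. The strategy is to transfer the unramified symbol constructed over a rational function field in Section 3 to the given function field $K = k(X)$, using a quadratic extension obtained by pulling back the generalized Kummer double cover $X_0 \to \P^n$.

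\textbf{Setup.} By resolution of singularities, fix a smooth projective model $X$ of $K$ of dimension $n$, and by Noether normalization a finite surjective morphism $\pi : X \to \P^n$. We may enlarge $k$ to contain $\C$ (and deduce the case of general algebraically closed $k$ of characteristic $\neq 2$ by a standard spreading-out argument with respect to a Dedekind base parametrizing both $X$ and the Legendre parameters). Choose elliptic curves $E_1, \dotsc, E_n$ in Legendre form \eqref{eq:legendre} with algebraically independent $j$-invariants, and let $X_0 \to \P^n$ be the generalized Kummer double cover of Section 3, defined by $y^2 = f(x_1, \dotsc, x_n)$.

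\textbf{Construction and unramifiedness.} If $\pi^* f$ is already a square in $K$, set $L = K$ (aiming for the first alternative of the conjecture); otherwise set $L = K(\sqrt{\pi^* f})$, which is the function field of an irreducible component of the fiber product $X \times_{\P^n} X_0$. In both cases, the pulled-back symbol $\pi^* \gamma = (x_1) \dotsm (x_n) \in H^n(L, \mu_2^{\tensor n})$ should be unramified at every discrete valuation of $L$ by the argument of the main proposition of Section 3: that proof is local at a valuation $v$, and the key inputs, namely that the residue field $\kappa(v)$ has transcendence degree at most $n - m$ over $k$ so that Tsen--Lang kills the residual cup product $(\overline u)(\overline x_{m+1}) \dotsm (\overline x_n)$, apply to any algebraic extension of $k(x_1, \dotsc, x_n)(\sqrt{f})$ in the same way.

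\textbf{Main obstacle.} The main obstacle is showing that $\pi^* \gamma$ is nontrivial in $H^n(L, \mu_2^{\tensor n})$. On $k(X_0)$, nontriviality of $\gamma$ is precisely the Gabber appendix to \cite{colliot:exposant_indice}, and $L$ is a finite extension of $k(X_0)$ of degree $d = \deg \pi$ via the second projection of the fiber product. When $d$ is odd, the corestriction-restriction formula $\cores \circ \res = d$ shows that restriction is injective on $2$-torsion, preserving nontriviality. The hard case is even $d$, where the symbol could conceivably become trivial over $L$; one may try to sidestep this by refining the choice of $\pi$ (e.g.\ composing with an odd-degree rational self-map of $\P^n$, or choosing a Noether normalization of odd degree when possible), or by a direct Hodge-theoretic analysis of the tower $X \times_{\P^n} Y \to X \times_{\P^n} X_0 \to X$ with $Y = E_1 \times \dotsm \times E_n$ to track the survival of $\gamma$ in the \'etale cohomology of an appropriate resolution. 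Establishing this nontriviality uniformly in $\deg \pi$ is the essential content that any proof of Conjecture \ref{conj:double} along this geometric realization path must supply.
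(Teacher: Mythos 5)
The statement you were asked about is Conjecture~\ref{conj:double}, which the paper explicitly leaves \emph{open}: there is no proof in the paper to compare against. The authors only record two special cases --- $K$ purely transcendental over $k$ (via Proposition~\ref{prop:main_unram}) and $n=2$ in characteristic $0$ (via \cite[Proposition~6.4]{APS}) --- and use the conjecture as a hypothesis in Proposition~\ref{prop:reduce}. So any complete proof you supplied would be new mathematics, and your proposal, by your own admission, is not one: it is a strategy with the decisive step missing.

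Concretely, the half of your argument that does work is the unramifiedness: the local computation in Section~3 only uses that the field in question contains $k(x_1,\dotsc,x_n)(\sqrt{f})$, that $f=x_1\dotsm x_m u$ becomes a square there, and that the residual classes $\overline{u},\overline{x}_{m+1},\dotsc,\overline{x}_n$ live in a subfield of transcendence degree $\leq n-m$, so it applies verbatim to your $L$. The genuine gap is the nontriviality of $\pi^*\gamma$ in $H^n(L,\mu_2^{\tensor n})$, and the devices you offer to close it in the even-degree case do not work: composing $\pi$ with a self-map of $\P^n$ \emph{multiplies} degrees, so it can never convert an even-degree $\pi$ into an odd-degree one, and there is no general reason an odd-degree finite morphism $X\to\P^n$ should exist. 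Note also that in your branch ``$\pi^*f$ is a square in $K$'' you face the identical nontriviality problem for $\Hur^n(K/k,\mu_2^{\tensor n})$, and that your reduction of general algebraically closed $k$ to $\C$ is itself nontrivial in positive characteristic (nonvanishing of unramified cohomology is not obviously preserved under the lifting/specialization you invoke). Since the whole content of the conjecture is precisely the survival of a nontrivial unramified class on $X$ or on some double cover of it, what you have written is a restatement of the difficulty along one plausible geometric path, not a proof.
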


Assuming this conjecture, we can give a more direct proof of the
existence of quadratic forms representing a failure of the
local-global principle for isotropy without using the 
construction involving generalized Kummer varieties in \S\ref{sec:kummer}.

\begin{prop}
\label{prop:reduce}
Let $K$ be a finitely generated field of transcendence degree $n$ over
an algebraically closed field $k$ of characteristic $\neq 2$.
If Conjecture~\ref{conj:double} holds for~$K$, then the local-global
principle for isotropy of quadratic forms fails to hold in dimension
$2^n$ over $K$.
\end{prop}

Before proceeding with the proof of Proposition~\ref{prop:reduce}, we
recall a standard application of the Milnor conjectures for the Witt
group.  Since we could not find a suitable reference, we also provide
a proof.

\begin{lemma}
\label{lem:symbol}
Let $K$ be a field of characteristic $\neq 2$.  If $K$ is a
$C_n$-field then every element in $H^n(K,\mu_2^{\tensor n})$ is a
symbol.
\end{lemma}
\begin{proof}
By the Milnor conjectures for the Witt group, as proved by
Voevodsky~\cite{voevodsky:Milnor_conjecture}
and Orlov, Vishik, Voevodsky~\cite{orlov_vishik_voevodsky}, there
exists a surjective homomorphism $e_n : I^n(K) \to
H^n(K,\mu_2^{\tensor n})$ taking $n$-fold Pfister forms to symbols,
where $I^n(K)$ is the $n$th power of the fundamental ideal of the Witt
group of $K$.  Thus it suffices to prove that every element in
$I^n(K)$ is represented by a Pfister form.  Let $q$ be an anisotropic
quadratic form representing a class in $I^n(K)$.  By the
Arason--Pfister Hauptsatz (see
\cite[Ch.~4,~Theorem~5.6]{scharlau:book}), $q$ has dimension $\geq
2^n$, but since we are assuming that $K$ is a $C_n$-field, every
quadratic form of dimension $> 2^n$ is isotropic, hence $q$ has
dimension $2^n$.

Now we recall that every anisotropic form $q$ of dimension $2^n$ in
$I^n(K)$ is similar to a Pfister form over (any field) $K$, cf.\
\cite[Corollaire~4.3.7]{kahn}.  Indeed, let $K(q)$ be the function
field of the projective quadric defined by $q$.  Then $q_{K(q)} \in
I^n(K(q))$. Since $q$ is isotropic over $K(q)$, the anisotropic part
of $q_{K(q)}$ over $K(q)$ has dimension smaller than $2^n$, hence by
the Arason--Pfister Hauptsatz must be zero, thus $q$ is hyperbolic
over $K(q)$.  Being anisotropic over $K$ and hyperbolic over
$K(q)$, the quadratic form~$q$ is thus similar to a Pfister form over $K$,
see \cite[Ch.~4,~Theorem~5.4(i)]{scharlau:book}.

Since $K$ is assumed to be a $C_n$-field, and $I^{n+1}(K)$ is
additively generated by $(n+1)$-fold Pfister forms by
\cite[Ch.~4,~Lemma~5.5]{scharlau:book}, which are hyperbolic as soon
as they are isotropic by \cite[Ch.~4,~Corollary~1.5]{scharlau:book},
we conclude that $I^{n+1}(K)=0$.  We now argue that any quadratic
form in $I^n(K)$ that is similar to a Pfister form is actually a Pfister
form.  Indeed, if $\psi$ is any Pfister form in $I^n(K)$ and $a \in
K^\times$, then $\Pfister{\!a\!\!}\!  \tensor\, \psi = \psi \perp
-a\psi$ is in $I^{n+1}(K)=0$, hence $\psi \isom a\psi$. Thus our
anisotropic quadratic form $q$ in $I^n(K)$ is a Pfister form, proving
the desired statement.
\end{proof}

\vspace*{-1pt}
We do not know, in the spirit of
\cite[II.4.5~Remark~3]{serre:galois_cohomology} and
\cite{krashen_matzri}, whether the statement of Lemma~\ref{lem:symbol}
holds for Galois cohomology modulo $\ell$ for primes $\ell \neq 2$.

\begin{proof}[Proof of Proposition~\ref{prop:reduce}]
First, by Lemma~\ref{lem:symbol}, every element in
$H^n(K,\mu_2^{\tensor n})$ is a symbol since $K$ is a $C_n$-field by
Tsen--Lang theory~\cite{lang}.  Proposition~\ref{prop:local_isotropic}
(applied with $d=1$) implies that for any symbol $(a_1)\dotsm (a_n)$
in $\Hur^n(K/k,\mu_2^{\tensor n})$, the $n$-fold Pfister form
$\Pfister{a_1,\dotsc,a_n}$ is locally isotropic.  If we assume that
$\Hur^n(K/k,\mu_2^{\tensor n}) \neq 0$, then taking a nontrivial
unramified symbol $(a_1)\dotsm (a_n)$, the $n$-fold Pfister form
$\Pfister{a_1,\dotsc,a_n}$ is locally isotropic but is anisotropic by
Proposition~\ref{prop:twisted_split}, giving a counterexample to the
local-global principle for isotropy over $K$.

Now assume that $\Hur^n(K/k,\mu_2^{\tensor n})=0$ and that
$\Hur^n(L/k,\mu_2^{\tensor n}) \neq 0$ for some separable quadratic
extension ${L=K(\sqrt{d})}$ of $K$.  By Tsen--Lang theory (e.g.,
\cite[II.4.5]{serre:galois_cohomology}), $L$ is also a $C_n$-field,
hence by Lemma~\ref{lem:symbol} every element in $H^n(L,\mu_2^{\tensor
n})$ is a symbol.  Thus we can choose a nontrivial unramified symbol
$(a_1)\dotsm (a_n) \in \Hur^n(L/k,\mu_2^{\tensor n})$.  Since the
corestriction map $H^n(L,\mu_2^{\tensor n}) \to H^n(K,\mu_2^{\tensor
n})$ preserves unramified cohomology, and we have assumed that
$\Hur^n(K/k,\mu_2^{\tensor n})=0$, we see that the corestriction of
$(a_1)\dotsm (a_n)$ is trivial.    By the restriction-corestriction exact
sequence for Galois cohomology, see \cite[Satz~4.5]{arason} or \cite[I\S2~Exercise~2]{serre:galois_cohomology}, we have that $(a_1) \dotsm (a_n)$ is
in the image of the restriction map $H^n(K,\mu_2^{\tensor n}) \to
H^n(L,\mu_2^{\tensor n})$, and thus we can take $a_1, \dotsc, a_n \in
K\mult$.  Then by Proposition~\ref{prop:local_isotropic}, the twisted
Pfister form $\Pfister{a_1,\dotsc,a_n; d}$ is locally isotropic over $K$ but
globally anisotropic.
\end{proof}

However, under the hypothesis in which we prove
Theorem~\ref{thm:main}, namely, that $k$ is not the algebraic closure
of a finite field, our method allows us to prove
Conjecture~\ref{conj:double}.

\begin{theorem}
\label{thm:pres}
Let $k$ be an algebraically closed field of characteristic $\neq 2$.
If $k$ is not the algebraic closure of a finite field then
Conjecture~\ref{conj:double} holds for any finitely generated field
$K$ of transcendence degree $n$ over $k$.
\end{theorem}
\begin{proof}
By Bogomolov's trick (Corollary~\ref{cor:trick}) we consider $K$ as an
extension $K/K_0$ of odd degree over a rational function field $K_0 =
k(x_1,\dotsc,x_n)$. By Theorem~\ref{thm:nontrivial}, the symbol $(x_1)
\dotsm (x_n) \in H^n(K_0,\mu_2^{\tensor n})$ is nontrivial over the
(separable) quadratic extension $L_0 = K_0(\sqrt{f})$ for $f \in K$
defined by \eqref{eq:y}.  Since $K/K_0$ and $L/K_0$ have relatively
prime degree, $L= K \tensor_{K_0} L_0$ is a quadratic extension of $K$
and $L/L_0$ has odd degree.  Thus by a standard restriction-corestriction
argument, the symbol $(x_1) \dotsm (x_n)$ remains nontrivial when
restricted from $L_0$ to $L$.  By Proposition~\ref{prop:main_unram},
it is unramified over $L_0$, hence it remains unramified over $L$.
\end{proof}

\begin{remark}
When $k$ is the algebraic closure of a finite field of characteristic
$\neq 2$, then Conjecture~\ref{conj:double} holds for $n=2$.  Indeed,
following the proof of Theorem~\ref{thm:pres}, we only need to show
that $(x_1) \cdot (x_2) \in H^2(K_0,\mu_2^{\tensor n})$ is nontrivial
over the quadratic extension $L_0 = K_0(\sqrt{f})$, which follows from
Remark~\ref{rem:surfaces_finite_fields}.
\end{remark}

Thus we have reduced Conjecture~\ref{conj:double} to $k$ the algebraic
closure of a finite field.  However, the construction of nontrivial
higher degree unramified cohomology on varieties over a finite field
(or the algebraic closure of a finite field) is an open problem.  In
degree 3, this is related to the integral Tate conjecture.  Currently,
there are no known smooth projective threefolds over a finite field
with nontrivial unramified cohomology in degree 3; investigating this
is a favorite problem of Colliot-Th\'el\`ene, see
\cite[Question~5.4]{CT_kahn}.  The smallest known dimensions in which
such varieties exist is 5 (see \cite{pirutka}), and recently, 4 (see
\cite{scavia_suzuki}).  Of course, one wonders whether the cup product
class on a product of three elliptic curves, as in \S\ref{sec:kummer},
is nontrivial over a finite field.  One might also investigate the
same class on the associated generalized Kummer variety over a finite
field.

\providecommand{\bysame}{\leavevmode\hbox to3em{\hrulefill}\thinspace}
\providecommand{\href}[2]{#2}

\end{document}